\theoremstyle{plain}
\newtheorem{teo}{Theorem}[section]
\newtheorem{theo}[teo]{Theorem}
\newtheorem{coro}[teo]{Corollary}
\newtheorem{lema}[teo]{Lemma}
\newtheorem{prop}[teo]{Proposition}
\theoremstyle{remark}
\newtheorem{rk}[teo]{Remark}
\newtheorem{example}[teo]{Example}
\theoremstyle{definition}
\newtheorem{defi}[teo]{Definition}
\newcommand{\N}{\mathbb{N}}
\begin{document}
\sloppy

\title[The Igusa-Todorov function for comodules]{The Igusa Todorov function for comodules}

\subjclass[2000]{16} \keywords{QcF coalgebra, semiperfect coalgebra, Igusa-Todorov function}

\begin{abstract}
We define the Igusa-Todorov function in the context of finite dimensional comodules and prove that a coalgebra is left qcF if and only if it is left semiperfect and its Igusa-Todorov function on each right finite dimensional comodule is zero.
\end{abstract}

\author{Mariana Haim, Marcelo Lanzilotta and Gustavo Mata}
\address{Universidad de la Rep\'ublica\\
Montevideo\\Uruguay\\}
\email{negra@cmat.edu.uy, marclan@fing.edu.uy, gmata@fing.edu.uy}
\thanks{The authors want to thank ANII-FCE 2007-059}

\maketitle

\section{Introduction}
\noindent
The Igusa-Todorov function (IT-function) appeared first in \cite{kn:igusatodorov} and has been considered again in \cite{kn:marchuard} and \cite{kn:hlm}. It is a new homological tool that generalises the notion of injective dimension (see Lemma \ref{gen}). In \cite{kn:marchuard}, the authors proved that, for artinian rings, the selfinjectivity can be characterised by the nullity of the IT- function on each finitely generated module. In other words, they proved that a ring is quasi-Frobenius if and only if it is right artinian and its IT-function is zero on each finitely generated right module.\\
\noindent
A coalgebra $C$ is said to be left (right) quasi-co-Frobenius (qcF) if every indecomposable injective right (left) $C$-comodule is projective. Since indecomposable projective comodules are finite dimensional, a left (right) qcF coalgebra is left (right) semiperfect, that is, all indecomposable injective right (left) comodules are finite dimensional. \\
We define here an IT-function in the context of finite dimensional comodules and we prove that a coalgebra $C$ is left qcF if and only if it is left semiperfect and its IT-function is zero. \\
Even if this equivalence can be seen as a possible dual version of
the problem treated in \cite{kn:marchuard}, it is not strictly the
case (we deal with coalgebras while the authors in
\cite{kn:marchuard} deal with artinian rings) and the proof given
here uses quite different ideas and tools.

\noindent
It is worth to remark that while the notion of quasi-Frobenius ring is left-right symmetric, the notion of qcF coalgebra is not. This applied to a coalgebra that is left and also right semiperfect will give an example of a coalgebra for which its IT-function of right comodules is zero, while its IT-function for left comodules is not (see Example \ref{example}). \\
\noindent
In what follows we present a brief description of the contents of this paper.\\
In Section \ref{pre} we recall a few basic notations and give the definitions, main properties and examples needed to understand and treat the question.\\
In Section \ref{mainresult} we prove the main result mentioned above and some needed previous lemmas.\\
In Section \ref{last} we use some computations with the IT-function, to deduce some well known facts about qcF coalgebras.

\section{The Igusa-Todorov function}\label{pre}
\subsection{Some notations}
In this work, $C$ will be a coalgebra over a field $\Bbbk$ and we will denote by $\mathcal {M}^C$ and $^C\!\!\mathcal {M}$ the categories of right and left comodules over $C$ respectively and by $\mathcal {M}^C_f$ and $^C\!\!\mathcal {M}_f$ the respective complete subcategories of finite dimensional comodules. Since $\mathcal {M}^C$ and $^C\!\!\mathcal {M}$ are Grothendieck categories, every object in them has an injective envelope (see for example \cite{kn:rumanos}).

\subsection{The Igusa-Todorov function on comodules}
Let $C$ be a coalgebra and $K(C)$ be the free abelian group
generated by all symbols $[M]$ with $M \in \ \mathcal {M}^C$ under
the relations

\begin{enumerate}
  \item $[A]-[B]-[C]$ if $A\cong B \oplus C$,
  \item $[I]$ if $I$ is injective.
\end{enumerate}

\noindent Then $K(C)$ is the free abelian group generated by all
isomorphism classes of indecomposable non injective objects in
$\mathcal {M}^C$. As the syzygy $\Omega^{-1}$  respects direct sums
and sends injective comodules to 0, it gives rise to a group
morphism (that we also call $\Omega^{-1}$) $\Omega^{-1}: K(C)
\rightarrow K(C)$.

\vspace{2mm}

\noindent For any $M \in \ \mathcal {M}^C_f$, let $\langle M \rangle$ denote the subgroup of $K(C)$ generated by all the symbols $[N]$, where $N$ is an indecomposable non injective direct summand of $M$. Since the rank of $\Omega^{-1}(\langle M \rangle)$ is less or equal to the rank of $\langle M \rangle$, which is finite, it exists a non-negative integer $n$ such that the rank of $\Omega^{-n}(\langle M\rangle)$ is equal to the rank of $\Omega^{-i}(\langle M \rangle)$ for all $i \geq n$. Let $\varphi(M)$ denote the least such $n\in \mathbb N$.

\vspace{2mm}
\noindent
The main properties of $\varphi$ are summarised in the following lemma, whose version for Artin algebras has been proved, almost all in \cite{kn:igusatodorov} and the last in \cite{kn:hlm} and can be easily adapted to obtain the version for coalgebras.

\begin{lema}\label{gen}(\cite{kn:igusatodorov}, \cite{kn:hlm})
  Let $C$ be a coalgebra and $M$, $N \in \ \mathcal {M}^C_f$.
  \begin{enumerate}
    \item If the injective dimension of $M$, $id M$, is finite, then $id M = \varphi(M)$.
    \item If $M$ is indecomposible of infinite injective dimension, then $\varphi(M) = 0$.
    \item $\varphi(N\oplus M) \geq \varphi (M)$.
    \item $\varphi (M^{k}) = \varphi (M)$ if $k\geq 1$.
    \item $\varphi (M) \leq \varphi(\Omega^{-1} M) + 1$, whenever $\Omega^{-1} M$ is finite dimensional.
  \end{enumerate}
\end{lema}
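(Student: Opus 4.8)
The plan is to reduce all five statements to two purely formal facts about the group endomorphism $f=\Omega^{-1}$ of the free abelian group $K(C)$, and then read off each item. For a finitely generated subgroup $H\subseteq K(C)$ write $\psi(H)$ for the least $n$ with $\mathrm{rank}(f^mH)=\mathrm{rank}(f^nH)$ for all $m\ge n$; this exists since the ranks of $f^nH$ form a non-increasing sequence of non-negative integers. By construction $\varphi(M)=\psi(\langle M\rangle)$, so everything becomes a statement about $\psi$. I record three elementary remarks used repeatedly: $\langle M^{k}\rangle=\langle M\rangle$ for $k\ge 1$ (same indecomposable non-injective summands), $\langle M\rangle\subseteq\langle N\oplus M\rangle$, and $f(\langle M\rangle)=\Omega^{-1}\langle M\rangle\subseteq\langle\Omega^{-1}M\rangle$ (each $[\Omega^{-1}M_i]$ is a sum of classes of indecomposable non-injective summands of $\Omega^{-1}M$).

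The two formal facts are: (b) if $H\subseteq H'$ are finitely generated subgroups then $\psi(H)\le\psi(H')$; and (c) $\psi(H)\le\psi(fH)+1$. Fact (c) is immediate: if $\mathrm{rank}(f^k fH)$ is constant for $k\ge\psi(fH)$ then $\mathrm{rank}(f^jH)$ is constant for $j\ge\psi(fH)+1$. For (b), the key observation is that once the rank has stabilised the map is injective there: set $n=\psi(H')$; for every $m\ge n$ the surjection $f\colon f^mH'\to f^{m+1}H'$ is a surjection of free abelian groups of the same finite rank, hence an isomorphism, so $f$ is injective on $f^nH'$ and, inductively, on every $f^mH'$ with $m\ge n$. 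Since $f^nH\subseteq f^nH'$, the map $f$ and all its iterates are injective on $f^nH$, whence $\mathrm{rank}(f^{n+k}H)=\mathrm{rank}(f^nH)$ for all $k\ge0$ and therefore $\psi(H)\le n$.

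Items (3), (4) and (5) now follow formally. Item (4) is the equality $\langle M^{k}\rangle=\langle M\rangle$. Item (3) is the inclusion $\langle M\rangle\subseteq\langle N\oplus M\rangle$ together with (b). For (5), assuming $\Omega^{-1}M$ finite dimensional so that $\varphi(\Omega^{-1}M)=\psi(\langle\Omega^{-1}M\rangle)$ is defined, I combine (c), (b) and the inclusion $\Omega^{-1}\langle M\rangle\subseteq\langle\Omega^{-1}M\rangle$:
\[
\varphi(M)=\psi(\langle M\rangle)\le\psi(\Omega^{-1}\langle M\rangle)+1\le\psi(\langle\Omega^{-1}M\rangle)+1=\varphi(\Omega^{-1}M)+1 .
\]

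The remaining two items are proved by a direct analysis of the ranks $r_n=\mathrm{rank}(f^n\langle M\rangle)$, and item (1) is the only place where genuine homological input enters, so I regard it as the main point (facts (b) and (c) are the technical heart but are routine). For (2), $M$ indecomposable non-injective gives $\langle M\rangle=\mathbb Z[M]$ of rank $1$, and infinite injective dimension means $\Omega^{-n}M$ is never injective, so $r_n=1$ for all $n$ and $\varphi(M)=0$. For (1) with $d=\id M\ge1$, write $M=\bigoplus_iM_i$ into indecomposables: each $\Omega^{-d}M_i$ is injective so $r_d=0$, giving $\varphi(M)\le d$; choosing $j$ with $\id M_j=d$ we have $\Omega^{-(d-1)}M_j$ non-injective, so $[\Omega^{-(d-1)}M_j]\ne0$, hence $r_{d-1}\ge1>r_d$ and $\varphi(M)\ge d$ (the case $d=0$ is trivial since then $\langle M\rangle=0$). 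The one subtlety to keep in mind is that cosyzygies of finite dimensional comodules may fail to be finite dimensional, which is exactly why $K(C)$ is built from all of $\mathcal M^C$ and why the hypothesis of (5) is needed for $\varphi(\Omega^{-1}M)$ to be defined.
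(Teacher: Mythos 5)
Your proof is correct, and it is essentially the intended one: the paper gives no argument for this lemma at all, citing \cite{kn:igusatodorov} and \cite{kn:hlm} and asserting that the Artin-algebra proofs ``can be easily adapted,'' and what you have written is precisely that adaptation --- the Fitting-lemma/rank-stabilization argument of Igusa--Todorov (monotonicity of $\psi$ under inclusion of finitely generated subgroups, proved via the fact that a surjection of free abelian groups of equal finite rank is an isomorphism), dualized from syzygies to cosyzygies. Your explicit flagging of the one genuinely non-module-theoretic point (cosyzygies of finite dimensional comodules need not be finite dimensional, which is why $K(C)$ must be built from all of $\mathcal M^C$ and why item (5) carries its hypothesis) supplies exactly the detail the paper's citation glosses over.
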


\noindent
In a similar way it is possible to define $\varphi$ on the category $^C\!\!\mathcal {M}_f$. We will use the same notation for both functions, when no confusion arises.

\begin{rk}
Note that $\varphi (M)=0$ mains that rk$\Omega^{-n}(\langle M \rangle)$ remains constant for all $n\in \N$.
\end{rk}

\begin{defi}
  For a coalgebra $C$ we define
  $$ dim_\varphi(^C\!\! \mathcal M_f) = sup\left \{\varphi(M) \mbox{ with }M \in\ ^C\!\!\mathcal {M}_f \right \},$$
  $$dim_\varphi(\mathcal M_f^C) = sup\left \{ \varphi(M) \mbox{ with }M \in \mathcal {M}^{C}_f \right \}. $$
\end{defi}
\noindent
\ \\
The following example shows that $dim_\varphi (^C\!\!\mathcal M_f)$
and $dim_\varphi(\mathcal M^C_f)$ can be different.

\begin{example}\label{example}
Consider the quiver
$$
Q=\xymatrix
 {\cdots \ar[r] & \stackrel{4}{\cdot} \ar[r] & \stackrel{3}{\cdot} \ar[r] & \stackrel{2}{\cdot} \ar[r]  & \stackrel{1}{\cdot} \ar[r] & \stackrel{0}{\cdot} \\ }
 $$
\noindent and let $C$ be the coalgebra whose elements are all paths
in $\Bbbk Q$ of length less or equal to one. Each comodule $M$ in $\
^{C}\mathcal {M}_f$ can be seen as a  $\Bbbk Q$-representation
$(M_i, T_i)_{i\in \N}$ where $T_i: M_{i+1}\rightarrow M_i$ is such
that $T_i.T_{i+1}=0$, for all $i\in \N$. It is easy to check that
every such representation can be decomposed into a sum of
indecomposable representations of the form (see \cite{kn:simson}):

\begin{itemize}
  \item $ \xymatrix
{\cdots \ar[r]^{0} & 0 \ar[r]^{0} & 0 \ar[r]^{0} & \Bbbk \ar[r]^{1_{\Bbbk}} & \Bbbk \ar[r]^{0} & \cdots \ar[r]^{0} & 0 \ar[r]^{0} & 0\\}$

  \item $ \xymatrix
{\cdots \ar[r]^{0} & 0 \ar[r]^{0} & 0 \ar[r]^{0} & \Bbbk \ar[r]^{0} & 0 \ar[r]^{0} & \cdots \ar[r]^{0} & 0 \ar[r]^{0} & 0 \\ }$
\end{itemize}
\noindent
Note that representations of the first type are injective, while representations of the second type are simple. As $\varphi(M\oplus I)=\varphi(M)$, whenever $I$ is injective, in order to prove that $\varphi(M)=0$, for every comodule $M$ in $\ ^{C}\mathcal {M}_f$, it is enough to show it for cosemisimple comodules. If we consider:

$$M = \xymatrix
{   \cdots \ar[r]^{0} &  0 \ar[r]^{0} & 0 \ar[r]^{0} & V_k \ar[r]^{0} & \cdots \ar[r]^{0} & V_1 \ar[r]^{0} & V_0 \\ },$$
\noindent
after applying $\Omega^{-1}$ to $M$ we obtain the representation

$$\Omega^{-1}(M) = \xymatrix
{   \cdots \ar[r]^{0} & 0 \ar[r]^{0} & V_k \ar[r]^{0} & \cdots \ar[r]^{0} & V_1 \ar[r]^{0} & V_0 \ar[r] & 0 .\\ }$$

\noindent
Hence the ranks of $\langle\Omega^{-1}(M)\rangle$ and $ \langle M \rangle$ are equal, and then, by induction (because $\Omega^{-1}(M)$ is cosemisimple), the ranks of $\langle\Omega^{-n}(M)\rangle$ and $ \langle M \rangle$ are equal. Then $\varphi(M) = 0$ for every cosemisimple object in $^{C}\mathcal {M}_f$, so $dim_\varphi(\ ^C\mathcal M_f) = 0$.

\vspace{2mm}
\noindent
On the other hand, right $C$-comodules can be seen as $Q'$-representations $(M_i, T_i)_{i\in \N}$ where $T_i: M_{i}\rightarrow M_{i+1}$ is such that $T_i.T_{i+1}=0$, for all $i\in \N$ and

$$ Q'=\xymatrix
{ \cdots & \stackrel{4}{\cdot} \ar[l] & \stackrel{3}{\cdot} \ar[l] & \stackrel{2}{\cdot} \ar[l]  & \stackrel{1}{\cdot} \ar[l] & \stackrel{0}{\cdot} \ar[l] \\ } $$

\noindent Now, the right $C$-comodule:

$$M_n = \xymatrix
{   \cdots  & 0 \ar[l]^{0} & 0 \ar[l]^{0} & \Bbbk \ar[l]^{0} & \ar[l]^{0}  \cdots & 0 \ar[l]^{0} &  0 \ar[l]^{0} \\ }$$
\noindent (where the non zero vector space $\Bbbk$ is placed in the vertex $n$), has injective dimension $n$, so $\varphi(M_n) = n$ and therefore $dim_\varphi (\mathcal M^C_f)=\infty$. In particular $dim_\varphi(\ ^C\mathcal M_f) \neq dim_\varphi(\mathcal M^C_f)$.
\end{example}
\noindent
To finish this section, we recall the notions of quasi-co-Frobenius and semiperfect.
\begin{defi}\label{semip}
A coalgebra $C$ is said to be
\begin{itemize}
\item {\em left quasi-co-Frobenius}, shortly {\em left  qcF}, if every injective right $C$-comodule is projective.
\item {\em right quasi-co-Frobenius}, shortly {\em right qcF}, if $C^{op}$ is left qcF.
\item {\em left semiperfect} if all injective envelopes of simple right $C$-comodules are finite dimensional. Equivalently, if the category of left $C$-comodules has enough projectives (see \cite{kn:rumanos}).
\item {\em right semiperfect} if $C^{op}$ is left semiperfect.
\end{itemize}
\end{defi}

\section{The main result}\label{mainresult}

\noindent
In this section we will prove the main result of this work, stating that a coalgebra $C$ is left $qcF$ if and only if it is left semiperfect and verifies $dim_\varphi(\mathcal M^C_f)=0$ (Theorem \ref{main}).\\
\noindent
In order to do this, we start by proving some auxiliary results. In concrete, and Lemmas \ref{top}, \ref{epi} and \ref{aux} will be needed in the proof of Theorem \ref{main}.

\begin{lema}\label{top}
Let $C$ be a left semiperfect coalgebra with $dim_\varphi(\mathcal M^C_f)=0$. For any right simple $C$-comodule, $Top (E(S))$ is (defined and) simple.
\end{lema}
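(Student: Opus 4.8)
The plan is to argue by contradiction. Since $C$ is left semiperfect, $E(S)$ is finite dimensional, so it is a finite dimensional $C^*$-module and $\operatorname{Top}(E(S))=E(S)/\operatorname{rad}(E(S))$ is defined and is a nonzero semisimple comodule; moreover $E(S)$ is indecomposable, as its socle $S$ is simple and essential. I would assume that $\operatorname{Top}(E(S))$ is \emph{not} simple and then exhibit a finite dimensional right comodule $M$ with $\varphi(M)\geq 1$, contradicting $dim_\varphi(\mathcal M^C_f)=0$.

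First I would set $L=\operatorname{rad}(E(S))$ and write $\operatorname{Top}(E(S))=\bigoplus_{i=1}^{m}S_i'$ as a sum of nonzero simples, where $m\geq 2$. For each $i$ let $K_i$ be the preimage in $E(S)$ of $\bigoplus_{j\neq i}S_j'$, a maximal subcomodule with $E(S)/K_i\cong S_i'$ and $\bigcap_i K_i=L$. The crucial observation is that, since $S=\operatorname{soc}(E(S))$ is simple and essential, it is contained in every nonzero subcomodule; in particular $S\subseteq L\subseteq K_i$. Hence each of $L,K_1,\dots,K_m$ is indecomposable with socle $S$, is non-injective (being proper in its injective envelope), and has $E(S)$ as its injective envelope. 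Their cosyzygies are then computed directly as $\Omega^{-1}(K_i)=E(S)/K_i\cong S_i'$ and $\Omega^{-1}(L)=E(S)/L\cong\bigoplus_{i=1}^m S_i'$.

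Then I would take $M=L\oplus K_1\oplus\cdots\oplus K_m$. In $K(C)$ one has $[\Omega^{-1}L]=\sum_{i=1}^m[\Omega^{-1}K_i]$, so the images under $\Omega^{-1}$ of the generators of $\langle M\rangle$ span the same subgroup as $[S_1'],\dots,[S_m']$ alone, whose rank $d$ equals the number of distinct non-injective classes among the $S_i'$. On the other hand $[L]$ differs from every $[K_i]$ (its cosyzygy has $m\geq 2$ summands), and distinct non-injective $S_i'$ force non-isomorphic $K_i$, so $\operatorname{rk}\langle M\rangle\geq d+1>d=\operatorname{rk}\Omega^{-1}\langle M\rangle$. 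This strict drop already yields $\varphi(M)\geq 1$, the desired contradiction. If some $S_i'$ happens to be injective the contradiction is even quicker: $\Omega^{-1}K_i$ is then injective while $K_i$ is not, i.e. $id\,K_i=1$, which contradicts $dim_\varphi(\mathcal M^C_f)=0$ through Lemma \ref{gen}(1).

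The step I expect to be most delicate is the cosyzygy computation: everything rests on the essential-socle remark identifying $E(S)$ as the common injective envelope of $L$ and of the $K_i$, and on verifying that the single relation $[\Omega^{-1}L]=\sum_i[\Omega^{-1}K_i]$ in $K(C)$ genuinely lowers the rank. The subtle point is precisely that $L=\operatorname{rad}(E(S))$ must be included in $M$; two maximal subcomodules alone produce no rank drop, and it is the dependency among the images of $[L]$ and the $[K_i]$ that forces $\varphi(M)>0$.
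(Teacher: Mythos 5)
Your proof is correct and follows essentially the same route as the paper: the paper likewise takes kernels of surjections from $E(S)$ onto pieces of its top (two simple summands $S_1, S_2$ and their direct sum $S_1\oplus S_2$, rather than your radical $L$ and all $m$ maximal subcomodules), observes that each kernel contains the essential simple socle $S$ and is therefore indecomposable, non-injective, with injective envelope $E(S)$, and derives $\varphi\geq 1$ from the rank drop that the additive relation among the cosyzygies forces under $\Omega^{-1}$. Your uniform count ($d+1$ versus $d$) simply replaces the paper's case analysis on whether $S_1\cong S_2$.
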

\begin{proof}
\noindent
As $E(S)$ is finite dimensional, its top is (defined and) cosemisimple. We will prove that it is simple.\\
Suppose there are two simple right $C$-comodules $S_1, S_2$ such that $S_1\oplus S_2$ is a direct summand of $Top(E(S))$, and consider the following short exact sequences:
$$\xymatrix
{\sigma_1:   & 0 \ar[r] & K_1 \ar[r] & E(S) \ar[r]  &S_1 \ar[r] & 0\\
\sigma_2:   & 0 \ar[r] & K_2 \ar[r] & E(S) \ar[r]  &S_2 \ar[r] & 0\\
\sigma_3:   & 0 \ar[r] & K_3 \ar[r] & E(S) \ar[r]  &S_1 \bigoplus S_2 \ar[r] & 0\\ }$$
\noindent
Note that $E(S)$ has simple socle, and $K_i$ is a non injective indecomposable comodule (since its socle is simple), for $i\in \{1,2,3\}$. Moreover, $K_i$ is non zero, for $i\in \{1,2,3\}$, since $S_1$ and $S_2$ are non zero. Also, it is clear that $K_i \not \cong K_3$, for $i\in \{1,2\}$. So $rk \langle \{ [K_1], [K_2], [K_3]\}\rangle \geq 2$. Note also that $E(S)$ is the injective envelope of $K_i$, for $i\in \{1,2,3\}$.\\
\noindent If $S_1 \cong S_2$, then $rk \langle \{ [S_1], [S_2], [S_1 \bigoplus S_2]\}\rangle=1$, then $\phi (K_1\bigoplus K_2 \bigoplus K_3) \geq 1$, contradicting $dim_\varphi(\mathcal M^C_f)=0$. If not, then $rk \langle \{ [S_1], [S_2], [S_1 \bigoplus S_2]\}\rangle=2$ but also $K_1 \not \cong K_2$, so $rk \langle \{ [K_1], [K_2], [K_3]\}\rangle=3$, arising again to a contradiction.
\end{proof}
\noindent
The following is a technical result.
\begin{lema}\label{epi}
 Let $f:\bigoplus_{i\in I} U_i \to V$ be an epimorphism of right $C$-comodules, where $V$ has simple top. Then for some $i\in I$, $f_i: U_i \to V$ defined by restricting $f$ to $U_i$ is surjective.
\end{lema}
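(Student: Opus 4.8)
The plan is to pass to the simple top of $V$ and then recover full surjectivity using that the radical is superfluous. Write $\pi\colon V\twoheadrightarrow Top(V)=S$ for the canonical projection, where $S$ is simple by hypothesis, so that $rad(V):=\ker\pi$ is the unique maximal subcomodule of $V$. Since $f$ is an epimorphism and the source is a direct sum, every element of $\bigoplus_{i\in I}U_i$ is a finite sum $\sum u_i$ with $f\big(\sum u_i\big)=\sum f_i(u_i)$; hence the image of $f$ is the (possibly infinite) sum of the images of its restrictions, that is $V=\sum_{i\in I}f_i(U_i)$.

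First I would project everything onto the top. Applying $\pi$ gives $S=\sum_{i\in I}\pi\big(f_i(U_i)\big)$, and each $\pi(f_i(U_i))$ is a subcomodule of the simple comodule $S$, hence equals either $0$ or $S$. As the sum is $S\neq 0$, there must exist an index $i_0\in I$ with $\pi(f_{i_0}(U_{i_0}))=S$, which is exactly to say $f_{i_0}(U_{i_0})+rad(V)=V$.

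It then remains to upgrade this to $f_{i_0}(U_{i_0})=V$, and this is where the only real content lies: one needs that $rad(V)$ is a superfluous (small) subcomodule of $V$, i.e. that $W+rad(V)=V$ forces $W=V$ for every subcomodule $W\le V$; equivalently, since $rad(V)$ is the unique maximal subcomodule, that every proper subcomodule of $V$ is contained in it. I would justify this by $\Bbbk$-linear duality with the $C^{*}$-module $V^{*}$: subcomodules $W\le V$ correspond order-reversingly to submodules $W^{\perp}\le V^{*}$, the top $Top(V)$ corresponds to $soc(V^{*})$, and under this correspondence the superfluousness of $rad(V)$ translates precisely into the essentiality of $soc(V^{*})$ in $V^{*}$ (as $W+rad(V)=V$ becomes $W^{\perp}\cap soc(V^{*})=0$ and $W=V$ becomes $W^{\perp}=0$). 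Since every nonzero module has a simple submodule, its socle is essential, giving the required superfluousness; this is the comodule-side counterpart of the fact that the socle of a comodule is always essential. Granting this, $f_{i_0}(U_{i_0})+rad(V)=V$ together with superfluousness of $rad(V)$ yields $f_{i_0}(U_{i_0})=V$, so $f_{i_0}$ is surjective, as claimed.

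The main obstacle is precisely this last step. The projection-onto-the-top argument is immediate and purely formal, but everything hinges on knowing that $rad(V)$ is superfluous (equivalently, that every proper subcomodule sits inside the unique maximal one). I expect the real work to be confirming this in the generality in which the lemma will be applied, reducing to finite dimensional comodules where the duality with $C^{*}$-modules and the essentiality of the socle make the superfluousness transparent.
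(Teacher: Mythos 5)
Your proof is correct in the setting where the lemma is actually used (finite dimensional $V$), and it is essentially the paper's argument. The paper's proof runs in two lines: since $V$ has simple top it has a unique maximal subcomodule $M$; if no $f_i$ were surjective, then $Im(f_i)\subseteq M$ for every $i$, hence $Im(f)=\sum_{i\in I}Im(f_i)\subseteq M$, contradicting surjectivity of $f$. Your two steps (project onto $Top(V)$ to find $i_0$ with $f_{i_0}(U_{i_0})+rad(V)=V$, then invoke superfluousness of $rad(V)$) are a rearrangement of exactly this, because ``every proper subcomodule lies inside the unique maximal one'' is equivalent to ``$rad(V)$ is superfluous.'' What you add is a justification of that key containment, which the paper asserts with no comment; your duality argument ($rad(V)$ superfluous in $V$ iff $soc(V^{*})$ essential in $V^{*}$, and the socle of a nonzero finite dimensional module is essential) is correct once $V$ is finite dimensional.

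Moreover, the caution you express at the end is not pedantry: without finite dimensionality the needed containment, and indeed the lemma as stated, can fail. Let $C$ be the divided power coalgebra with basis $\{c_n\}_{n\geq 0}$ and $\Delta c_n=\sum_{i+j=n}c_i\otimes c_j$; its right coideals are exactly the chain $D_n=\mathrm{span}\{c_0,\dots,c_n\}$ together with $C$ itself, so every proper subcomodule of $C$ has infinite codimension. Put $V=D_1\oplus C$. Any comodule epimorphism from $V$ onto the unique simple comodule $D_0$ must kill the summand $C$ (its kernel would otherwise meet $C$ in a subcomodule of finite codimension), so $M=D_0\oplus C$ is the unique maximal subcomodule and $V/M$ is simple: $V$ has simple top. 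Yet $W=D_1\oplus D_0$ is a proper subcomodule with $M+W=V$, so the sum of the two inclusions $M\oplus W\to V$ is an epimorphism in which neither restriction is surjective. This does no harm to the paper, since the lemma is only applied to $V=E(S)$, which is finite dimensional because $C$ is left semiperfect; but it shows that the finite-dimensional reduction you anticipate is genuinely necessary, and that your argument supplies precisely the step the paper's proof leaves implicit.
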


\begin{proof}
 Since $V$ has simple top it has a unique maximal subcomodule $M$. If, for all $i\in I$, we have that $f_i$ is not surjective, then $Im(f_i) \subseteq M$, so $Im(f) =\sum_{i\in I} Im(f_i) \subseteq M$, contradicting that $f$ is surjective.
\end{proof}

\begin{lema}\label{aux}
If $P$ is a projective object in the category of finite dimensional
right comodules, then it is projective as a right comodule.
\end{lema}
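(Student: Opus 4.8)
The plan is to reduce the lifting problem against an arbitrary epimorphism of right comodules to a lifting problem inside $\mathcal{M}^C_f$, where the projectivity hypothesis on $P$ applies directly. The single tool that makes this reduction possible is the local finiteness of comodules: every right $C$-comodule is the directed union of its finite dimensional subcomodules, and the image of a finite dimensional comodule under a comodule map is again finite dimensional. I would treat this as a standard fact of comodule theory (see, e.g., \cite{kn:rumanos}). So let $g\colon M \twoheadrightarrow N$ be any epimorphism in $\mathcal{M}^C$ and $h\colon P \to N$ any morphism; the goal is to produce $\ell\colon P \to M$ with $g\ell = h$.

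First I would shrink the target. Since $P$ is finite dimensional, $N' := h(P)$ is a finite dimensional subcomodule of $N$, and $h$ corestricts to a map $P \to N'$. It thus suffices to lift this corestriction through $g$, because a lift into any subcomodule of $M$, composed with the inclusion, is a lift into $M$.

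Next I would build a finite dimensional subcomodule of $M$ that surjects onto $N'$. Choose a $\Bbbk$-basis $v_1,\dots,v_n$ of $N'$ and, using surjectivity of $g$, pick $w_i \in M$ with $g(w_i)=v_i$. Let $W \subseteq M$ be the subcomodule generated by $w_1,\dots,w_n$; by local finiteness $W$ is finite dimensional. Since $g$ is a comodule map, $g(W)$ is the subcomodule of $N$ generated by $v_1,\dots,v_n$, which is exactly $N'$ (the $v_i$ already span $N'$ over $\Bbbk$, so any subcomodule containing them contains $N'$). Hence $g|_W\colon W \to N'$ is an epimorphism between finite dimensional comodules, i.e. an epimorphism in $\mathcal{M}^C_f$.

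Finally, I would invoke the hypothesis: $P$ is projective in $\mathcal{M}^C_f$ and $h\colon P \to N'$ is a morphism there, so there is $\tilde h\colon P \to W$ with $(g|_W)\tilde h = h$. Composing $\tilde h$ with the inclusion $W \hookrightarrow M$ yields the required lift $\ell\colon P \to M$. The main obstacle, and really the only nontrivial point, is arranging the finite dimensional $W$ to map \emph{onto} all of $N'$: a priori $g^{-1}(N')$ is only guaranteed to be a (possibly infinite dimensional) subcomodule, and one must exhibit a finite dimensional piece of it that still covers $N'$. This is exactly where the local finiteness of comodules is essential.
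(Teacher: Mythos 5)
Your proof is correct, but it takes a genuinely different route from the paper's. The paper argues by duality: since $P$ is finite dimensional, $P^*$ is an injective object in the category of finite dimensional \emph{left} comodules ${}^C\mathcal{M}_f$; it then invokes Theorem 2.4.17 of \cite{kn:rumanos} (injectivity of a finite dimensional comodule can be tested inside the finite dimensional category) to conclude that $P^*$ is injective in all of ${}^C\mathcal{M}$, whence $P$ is projective in $\mathcal{M}^C$. You instead solve the lifting problem directly: given an epimorphism $g\colon M\to N$ in $\mathcal{M}^C$ and $h\colon P\to N$, you corestrict to the finite dimensional image $N'=h(P)$, use local finiteness (the fundamental theorem of comodules) to find a finite dimensional subcomodule $W\subseteq M$ with $g(W)=N'$ --- generated by preimages of a $\Bbbk$-basis of $N'$ --- and then apply projectivity of $P$ in $\mathcal{M}^C_f$ to the epimorphism $g|_W\colon W\to N'$. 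All the steps check out: images and preimages of subcomodules under comodule maps are subcomodules, the subcomodule generated by finitely many elements is finite dimensional, and epimorphisms in both $\mathcal{M}^C$ and $\mathcal{M}^C_f$ are exactly the surjective maps, so the composite $P\to W\hookrightarrow M$ is the required lift. The trade-off: the paper's argument is two lines but leans on a cited black-box theorem and on the contravariant equivalence $(\mathcal{M}^C_f)^{op}\simeq{}^C\mathcal{M}_f$ exchanging projectives and injectives; yours is longer but self-contained, avoids duality altogether, and makes explicit the only place where a genuine infiniteness issue arises (cutting the possibly infinite dimensional $g^{-1}(N')$ down to a finite dimensional $W$ still covering $N'$) --- in effect you re-prove, on the projective side, the content of the theorem the paper cites.
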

\begin{proof}
As $P$ is finite dimensional, its dual $P^*$ is an injective object
in the category of finite dimensional left comodules. From Theorem
2.4.17 in \cite{kn:rumanos}, we deduce that $P^*$ is an injective
left comodule and therefore $P$ is projective.
\end{proof}

\begin{theo}\label{main}
A coalgebra $C$ is left qcF if and only if it is left semiperfect and $dim_\varphi(\mathcal M^C_f)=0$.
\end{theo}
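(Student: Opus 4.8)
The plan is to prove both implications, using the auxiliary lemmas for the harder direction.

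First I would dispose of the easy implication: if $C$ is left qcF, then by definition every injective right $C$-comodule is projective, so in particular every indecomposable injective right comodule is projective and hence finite dimensional, which shows $C$ is left semiperfect. For the vanishing of $dim_\varphi(\mathcal{M}^C_f)$, I would argue that over a left qcF coalgebra injectives and projectives coincide (in the finite dimensional setting), so every finite dimensional right comodule has a syzygy $\Omega^{-1}M$ that is again finite dimensional and the injective resolution behaves well; the key point is that the class of injectives is closed enough that applying $\Omega^{-1}$ repeatedly to $\langle M\rangle$ does not drop the rank. Concretely I expect that in the qcF case every finite dimensional comodule is either injective or has infinite injective dimension is \emph{not} the right statement; rather, I would show directly that $\Omega^{-n}$ preserves the rank of $\langle M\rangle$, so $\varphi(M)=0$ for all $M$.

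The substantial direction is the converse: assume $C$ is left semiperfect with $dim_\varphi(\mathcal{M}^C_f)=0$, and prove $C$ is left qcF, i.e. every injective right comodule is projective. Since a right comodule is injective iff it is a direct sum of injective envelopes $E(S)$ of simples, and using Lemma \ref{aux} to reduce projectivity-in-$\mathcal{M}^C_f$ to genuine projectivity, it suffices to show each finite dimensional $E(S)$ is projective. Here is where the machinery enters: by Lemma \ref{top}, $Top(E(S))$ is simple, so $E(S)$ is the (finite dimensional) injective envelope with simple top, making it a natural candidate for a projective cover. The plan is to build a projective cover $P\twoheadrightarrow S$ in $\mathcal{M}^C_f$ (available since $C$ is left semiperfect, hence $^C\mathcal{M}$ has enough projectives and dually finite dimensional right comodules have projective covers), and then compare $P$ with $E(S)$ by exhibiting mutually inverse maps, concluding $E(S)\cong P$ is projective.

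The main obstacle I anticipate is establishing that the injective envelope $E(S)$ actually \emph{equals} (is isomorphic to) the projective cover of its top. The strategy would be: take the projective cover $\pi:P\to Top(E(S))=S'$ (a simple comodule) and lift the surjection $E(S)\twoheadrightarrow Top(E(S))$ through $\pi$, using Lemma \ref{epi} to guarantee that some indecomposable summand maps onto the target and that the comparison map is an epimorphism onto $E(S)$; then a dimension or socle argument forces this epimorphism to be an isomorphism. The delicate step is controlling the kernels and ensuring the lifted map is surjective with trivial kernel, which is exactly where the hypothesis $dim_\varphi(\mathcal{M}^C_f)=0$ must be reinvoked (as in the proof of Lemma \ref{top}) to rule out extra indecomposable non-injective summands appearing in syzygies. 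I expect the proof to proceed by showing that if $E(S)$ were not projective, its projective cover would produce a genuinely larger rank under $\Omega^{-1}$, contradicting the vanishing of $\varphi$.
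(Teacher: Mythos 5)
Your first direction is essentially the paper's, although you stop short of the actual mechanism: the reason $\Omega^{-1}$ preserves ranks when $C$ is left qcF is that the injective envelope $E(M)$ of a non-injective $M\in\mathcal M^C_f$ is projective and is a projective cover of $\Omega^{-1}(M)$, so that $\Omega\circ\Omega^{-1}=\id_{K(C)}$ and $\Omega^{-1}$ is an injective group morphism on $K(C)$; asserting "$\Omega^{-n}$ preserves the rank of $\langle M\rangle$" is the statement to be proved, not an argument.

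The converse direction contains a genuine gap. Your whole strategy is built around a projective cover $P\twoheadrightarrow Top(E(S))$ in $\mathcal M^C_f$, which you justify by saying that $C$ left semiperfect gives enough projectives in ${}^C\!\mathcal M$ and ``dually finite dimensional right comodules have projective covers.'' That dualization is backwards: under the duality $(-)^*$ between $\mathcal M^C_f$ and ${}^C\!\mathcal M_f$, projective covers of \emph{left} comodules correspond to injective envelopes of \emph{right} comodules --- this is exactly why left semiperfect is equivalent to $E(S)$ being finite dimensional for right simples $S$ --- and not to projective covers of right comodules. Enough projectives in $\mathcal M^C$ is equivalent to \emph{right} semiperfectness, which is not a hypothesis of the theorem; worse, exhibiting any nonzero projective right comodule under the given hypotheses is essentially the conclusion one is trying to reach (left qcF is precisely what makes the injectives $E(S)$ projective), so your plan assumes an object whose existence is morally equivalent to the theorem. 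The paper's proof is arranged so that projectivity is only ever used \emph{negatively}: if $E=E(S)$ is not projective, then by Lemma \ref{aux} there is a non-split exact sequence $0\to X\to U\to E\to 0$ in $\mathcal M^C_f$; since $Top(E)$ is simple (Lemma \ref{top}), Lemma \ref{epi} allows one to take $U$ indecomposable; pushing out along $X\hookrightarrow E(X)$ and applying the snake lemma gives $\Omega^{-1}(X)\cong\Omega^{-1}(U)\oplus E''$, hence $[\Omega^{-1}(X)]=[\Omega^{-1}(U)]$ in $K(C)$, while no summand of $X$ is isomorphic to $U$ (as $\dim X<\dim U$); therefore $\Omega^{-1}$ drops the rank of $\langle X\oplus U\rangle$, giving $\varphi(X\oplus U)\geq 1$ and contradicting $dim_\varphi(\mathcal M^C_f)=0$. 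No projective cover, and indeed no projective right comodule at all, is ever constructed; to salvage your approach you would first need to prove that $\mathcal M^C_f$ has projective covers under the stated hypotheses, which is not available at that point in the argument.
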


\begin{proof}

Assume first that $C$ is left qcF. It is well known that then $C$ is left semiperfect (see \cite{kn:rumanos}). In order to prove that $dim_\varphi(\mathcal M^C_f)=0$, note that as $C$ is left $qcF$ every injective right $C$-comodule is projective, so the group morphism $\Omega^{-1}: K(C) \rightarrow K(C)$ is injective. Indeed, the injective envelope of any finite dimensional non injective right comodule $M$ will be the projective cover of $\Omega^{-1}(M)$, so $\Omega \circ \Omega^{-1}=id_{K(C)}$. This implies that $\Omega^{-1}$ preserve ranks of subgroups, so $dim_\varphi(\mathcal M^C_f)=0$.\\
\ \\
\noindent
Suppose now that $C$ is left semiperfect and that $dim_\varphi(\mathcal M^C_f)=0$. We will prove that any injective right $C$-comodule is projective. \\
\noindent
Let $E$ be the injective envelope of a simple right $C$-comodule $S$. Assume that $E$ is not projective and consider the following commutative diagram:
$$\xymatrix
{\sigma_1: &0 \ar[r]  & X \ar[d]^\iota \ar[r]& U \ar[r] \ar[d] & E \ar[r] \ar@{=}[d]& 0\\
\sigma_2: & 0 \ar[r] & E(X) \ar[r] & E' \ar[r]  &E \ar[r] & 0 }, $$
where
\begin{itemize}
\item $\sigma_1$ is a non zero short exact sequence (it does exist since $E$ is not projective), with
\begin{itemize}
\item $X$ finite dimensional (see Lemma \ref{aux}),
\item $U$ indecomposable (note that $U$ is finite dimensional -since $X$ and $E$ are- and then we can apply Lemma \ref{epi} and assume is indecomposable).
\end{itemize}
\item $(E(X), \iota)$ is the injective envelope of $X$ and $\sigma_2=\iota . \sigma_1$ the pushout.
\end{itemize}
\noindent
Note that $E'$ is injective (since $E(X)$ and $E$ are) and that $X$ is not injective (since $\sigma_1 \neq 0$). Moreover, if we consider the following commutative diagram:
$$ \xymatrix
{ & & 0 \ar[d] & 0\ar[d] \\
\sigma_1: & 0 \ar[r] &X \ar[r] \ar[d]^f& U \ar[r] \ar[d]& E \ar@{=}[d] \ar[r]&0\\
\sigma_2: & 0 \ar[r] &E(X) \ar[r] \ar[d]^f& E' \ar[r] \ar[d] & E \ar[r]&0\\
& & \Omega^{-1} (X) \ar[d] &\Omega^{-1} (U)\bigoplus E'' \ar[d]\\
& & 0 &0 }$$
we get from the snake lemma that $\Omega^{-1} (X) \cong \Omega^{-1} (U)\bigoplus E''$. So $[\Omega^{-1} (X)]=[\Omega^{-1} (U)\bigoplus E'']=[\Omega^{-1} (U)]+[E'']=[\Omega^{-1} (U)]$. Since $E\neq 0$, then no summand of $X$ is isomorphic to $U$. Therefore rk$\Omega^{-1}\langle X\oplus U\rangle \leq$ rk$\langle X\oplus U\rangle - 1$ and then $dim_\varphi(\mathcal M^C_f)\geq 1$, a contradiction. Then $E$ is a projective module.\\
\end{proof}

\begin{section}{Some consequences}\label{last}
\noindent
The characterization of qcF coalgebras given by Theorem \ref{main} allows us to prove some known results about them, by using the tools given by the IT-function.

\begin{prop}\label{final}\ \\
\noindent
If $C$ is a left $qcF$ coalgebra, then:
\begin{enumerate}
\item[(a)] Every indecomposable injective right $C$-comodule has simple top.
\item[(b)] Every indecomposable projective left $C$-comodule has simple socle.
\end{enumerate}
\end{prop}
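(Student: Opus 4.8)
The plan is to deduce (a) directly from Theorem \ref{main} and Lemma \ref{top}, and then to obtain (b) from (a) by the duality between finite dimensional left and right comodules.

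First I would treat (a). The key observation is that the indecomposable injective right $C$-comodules are exactly the injective envelopes $E(S)$ of the simple right comodules $S$: since every comodule has essential socle, an indecomposable injective right $C$-comodule has simple socle $S$ and therefore coincides with $E(S)$. Now, if $C$ is left qcF then Theorem \ref{main} gives that $C$ is left semiperfect and that $dim_\varphi(\mathcal M^C_f)=0$; these are precisely the hypotheses of Lemma \ref{top}. Applying that lemma to each simple $S$ yields that $Top(E(S))$ is simple, which is exactly the assertion of (a).

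For (b), I would use the contravariant duality $(-)^*$ between $\mathcal M^C_f$ and ${}^C\!\mathcal{M}_f$ already exploited in Lemma \ref{aux}. Let $P$ be an indecomposable projective left $C$-comodule; as indecomposable projective comodules are finite dimensional, $P^*$ is a finite dimensional right $C$-comodule. By the same reasoning as in Lemma \ref{aux}, $P^*$ is injective, and it is indecomposable because $P$ is, so $P^*$ is an indecomposable injective right $C$-comodule. By part (a) it has simple top. Since $(-)^*$ is an exact contravariant equivalence it interchanges subobjects and quotients, and hence carries the top of $P^*$ to the socle of $P$; thus the socle of $P$ is simple, which is (b).

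The only step requiring care — and the main (mild) obstacle — is to record the properties of $(-)^*$ on finite dimensional comodules: that it is an exact contravariant equivalence sending indecomposable projectives to indecomposable injectives and exchanging socle with top. Everything else is a direct transcription of the earlier results, so once this duality is in place both parts follow in a few lines, with (a) being essentially a restatement of Lemma \ref{top} and (b) its image under $(-)^*$.
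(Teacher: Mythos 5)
Your proposal is correct and follows essentially the same route as the paper: part (a) is obtained by combining Theorem \ref{main} with Lemma \ref{top}, and part (b) by dualising a finite dimensional indecomposable projective left comodule $P$ to the indecomposable injective right comodule $P^*$ and using that $(-)^*$ exchanges top and socle, i.e.\ $Top(P^*)=(Soc\,P)^*$. The only difference is presentational: you spell out why indecomposable injectives are exactly the $E(S)$ and why $P^*$ is injective, where the paper simply cites \cite{gt} and \cite{kn:rumanos}.
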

\begin{proof}
\begin{enumerate}
\item[(a)] It is an immediate consequence of Lemma \ref{top}, after applying Theorem \ref{main}.
\item[(b)] It is known that every indecomposable projective left $C$-comodule $P$ is finite dimensional (see \cite{gt}) and then $P^*$ is an indecomposable injective right $C$-comodule (see for example \cite{kn:rumanos}, Chapter 2). But then note that $Top(P^*)=\left (Soc P\right )^*$. As $Top(P^*)$ is simple (and then finite dimensional) $Soc(P)$ also is simple.
\end{enumerate}
\end{proof}

\begin{prop}\label{nu}
Let $C$ be a left qcF coalgebra. Let $\mathcal S_r$ be a set of representatives of the isomorphism classes of simple right $C$-comodules. We can define a function
$$\ \nu_r: \mathcal S_r \to \mathcal S_r, \ \nu_r (S)= Top (E(S)) $$
that turns out to be injective.\\
\end{prop}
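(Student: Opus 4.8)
The plan is to first check that $\nu_r$ is well defined and then to obtain injectivity by dualizing each $E(S)$ to a left comodule, where the claim becomes the uniqueness of injective envelopes. For well-definedness, recall that since $C$ is left qcF, Theorem \ref{main} gives that $C$ is left semiperfect and satisfies $dim_\varphi(\mathcal M^C_f)=0$, so Lemma \ref{top} applies and $Top(E(S))$ is defined and simple for every simple right comodule $S$. As $E(-)$ and $Top(-)$ carry isomorphic comodules to isomorphic comodules, the assignment $S\mapsto Top(E(S))$ yields a well-defined map on the fixed set of representatives $\mathcal S_r$, $\nu_r(S)$ being the unique element of $\mathcal S_r$ isomorphic to $Top(E(S))$.

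For injectivity, suppose $S_1,S_2\in\mathcal S_r$ satisfy $\nu_r(S_1)\cong\nu_r(S_2)$. Because $C$ is left semiperfect, each $E(S_i)$ is finite dimensional, and because $C$ is left qcF it is simultaneously injective and projective; thus $E(S_i)$ is a finite dimensional indecomposable injective-projective right $C$-comodule with simple socle $S_i$. I would then dualize exactly as in the proof of Proposition \ref{final}(b): $E(S_i)^*$ is a finite dimensional indecomposable injective left $C$-comodule, and using that for a finite dimensional right comodule $M$ one has $Soc(M^*)=(Top\,M)^*$, I get $Soc(E(S_i)^*)=(Top\,E(S_i))^*=(\nu_r(S_i))^*$, which is simple. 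Hence $E(S_i)^*$ is the injective envelope in $\lcomod$ of the simple left comodule $(\nu_r(S_i))^*$.

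Now the hypothesis $\nu_r(S_1)\cong\nu_r(S_2)$ gives $(\nu_r(S_1))^*\cong(\nu_r(S_2))^*$, and by the uniqueness of injective envelopes in the Grothendieck category $\lcomod$ I conclude $E(S_1)^*\cong E(S_2)^*$. Dualizing back, the comodules being finite dimensional so that $(-)^{**}$ is naturally isomorphic to $\id$, this forces $E(S_1)\cong E(S_2)$; taking socles yields $S_1=Soc(E(S_1))\cong Soc(E(S_2))=S_2$, so the chosen representatives coincide and $\nu_r$ is injective. I expect the main obstacle to be purely organizational rather than conceptual: one must ensure that $Top(E(S_i))$ is genuinely simple so that its dual is a simple socle and the injective-envelope argument is available, which is precisely what the qcF hypothesis supplies through Lemma \ref{top}, and that the finite dimensionality coming from left semiperfectness is what legitimizes passing to duals and using $(-)^{**}\cong\id$.
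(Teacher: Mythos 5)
Your proof is correct, but the injectivity argument takes a genuinely different route from the paper's. The paper stays inside the Igusa--Todorov framework: assuming $S_1\not\cong S_2$ but $T_1=Top(E(S_1))\cong Top(E(S_2))=T_2$, it forms the exact sequences $0\to J_i\to E(S_i)\to T_i\to 0$, observes that each $J_i$ is indecomposable, non injective, with simple socle $S_i$, and that $\Omega^{-1}(J_i)\cong T_i$, and then uses $dim_\varphi(\mathcal M^C_f)=0$ to force $J_1\cong J_2$ (otherwise $\langle J_1\oplus J_2\rangle$ would have rank $2$ while its image under $\Omega^{-1}$ has rank at most $1$, giving $\varphi(J_1\oplus J_2)\geq 1$); taking socles then yields the contradiction $S_1\cong S_2$. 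You instead exploit the qcF hypothesis directly --- $E(S_i)$ is finite dimensional and projective --- and dualize: $E(S_i)^*$ is an indecomposable injective left comodule with simple socle $(\nu_r(S_i))^*$, hence the injective envelope of that socle, and uniqueness of injective envelopes together with $(-)^{**}\cong\id$ finishes. Your route is essentially the classical Nakayama-permutation argument and needs the IT-function only for well-definedness (via Lemma \ref{top}), whereas the paper's route is deliberately IT-theoretic: the stated purpose of Section \ref{last} is to rederive known facts about qcF coalgebras from the characterization in Theorem \ref{main}, and its argument avoids the duality machinery of \cite{kn:rumanos}. Two small points you should make explicit in your version: the passage from ``injective with simple socle'' to ``injective envelope of the socle'' uses that every nonzero comodule has nonzero socle (so a simple socle is automatically essential), and the fact that the dual of a finite dimensional projective right comodule is injective as a left comodule in the full category $\lcomod$ (not merely an injective object of ${}^C\mathcal M_f$) is the content of Theorem 2.4.17 of \cite{kn:rumanos}, exactly as invoked in Lemma \ref{aux} and Proposition \ref{final}(b).
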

\begin{proof}
\noindent
By Theorem \ref{main}, we know that $C$ is left semiperfect and that $dim_\varphi (\mathcal M ^C_f)=0$. From Lemma \ref{top} we can define $\ \nu$, that will be proved injective. Suppose it is not. Then, there are two simple non isomorphic right $C$-comodules $S_1$ and $S_2$ such that $T_1=Top (E(S_1))$ and $T_2=Top (E(S_2))$ are isomorphic. So we have the following short exact sequences:
$$\xymatrix
{
\sigma_1:   & 0 \ar[r] & J_1 \ar[r] & E(S_1) \ar[r]  &T_1 \ar[r] & 0\\
\sigma_2:   & 0 \ar[r] & J_2 \ar[r] & E(S_2) \ar[r]  &T_2 \ar[r] & 0
},$$
with $soc(J_1)=S_1$ and $soc(J_2)=S_2$. Now, $J_1$ and $J_2$ are non injective indecomposable comodules (since they have simple socle and $\sigma_1$, $\sigma_2$ are not zero).\\
\noindent
As $T_1\cong T_2$ and $dim_\varphi(\mathcal M^C_f)=0$, we get $J_1 \cong J_2$ and therefore their socles are isomorphic: $S_1\cong S_2$.
\end{proof}

\noindent
The function $\nu$ appears in contexts of categories of modules with finitely many simple modules (where it is bijective) and is usually called the {\em Nakayama permutation} (see for example \cite{kn:kur}).
\ \\
\noindent
Note that every cosemisimple coalgebra $C$ verifies $dim_\varphi (\ ^C\mathcal M_f)=dim_\varphi(\mathcal M^C_f)=0$, since every (right or left) comodule is injective. Next proposition deals with the non cosemisimple case.

\begin{prop}\label{simpleinjective}
Let $C$ be a left qcF coalgebra. If $C$ is indecomposable not simple, there are no simple injective right $C$-comodules.
\end{prop}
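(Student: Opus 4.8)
The plan is to argue by contradiction. Assume that $S$ is a simple injective right $C$-comodule. Since $C$ is left qcF, every injective right comodule is projective by definition, so $S$ is simultaneously simple, injective and projective. The whole idea is to show that such an $S$ forces $C$ to split off a simple subcoalgebra, which is impossible once $C$ is indecomposable and not simple.

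First I would extract the homological content of $S$ being projective-injective: projectivity gives $\mathrm{Ext}^1_C(S,-)=0$ and injectivity gives $\mathrm{Ext}^1_C(-,S)=0$. In particular $S$ has no self-extensions and no nonsplit extension relating it to any other simple right comodule, in either direction. This is exactly the point at which the qcF hypothesis is used: injectivity alone would only kill the incoming extensions, whereas turning $S$ into a projective as well kills the outgoing ones.

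Next I would translate this two-sided isolation of $S$ into a decomposition of the coalgebra. Writing $C=\bigoplus_\lambda C_\lambda$ as a direct sum of indecomposable subcoalgebras, the components $C_\lambda$ correspond to the connected components of the link (Ext) quiver of the simple $C$-comodules (see \cite{kn:rumanos}, \cite{kn:simson}). The vanishing of every $\mathrm{Ext}^1$ group involving $S$ shows that the simple coefficient subcoalgebra $D$ attached to $S$ is a whole such component, consisting of the single vertex $S$ with no loop; hence $D=C_{\lambda_0}$ is a simple coalgebra and a direct summand of $C$.

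Finally, since $C$ is indecomposable the decomposition $C=\bigoplus_\lambda C_\lambda$ reduces to a single term, so $C=D$ is a simple coalgebra, contradicting the hypothesis that $C$ is not simple. This contradiction proves that there are no simple injective right $C$-comodules. I expect the main obstacle to be the third step: justifying rigorously that the comodule-level vanishing of the $\mathrm{Ext}^1$ groups yields a genuine coalgebra direct-sum splitting $C=D\oplus C'$, which relies on the structure theory identifying indecomposable coalgebra components with connected components of the link quiver, and on correctly matching the simple comodule $S$ with its coefficient subcoalgebra $D$.
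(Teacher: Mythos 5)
Your proposal is correct, but it takes a genuinely different route from the paper's own proof. The paper never uses the projectivity of $S$: it only uses injectivity, invoking Chin's result (\cite{kn:chin}) that an indecomposable coalgebra has connected link quiver to produce a simple $T\neq S$ with $\mathrm{Ext}^1_C(S,T)\neq 0$, and then maps a nonsplit extension $0\to T\to M\to S\to 0$ into $0\to T\to E(T)\to X\to 0$; since $E(T)$ has simple top (Proposition \ref{final}(a), itself obtained from the IT-function machinery), $X$ is indecomposable, hence $X\cong S$, so $T$ has injective dimension $1$ and $\varphi(T)=1$ by Lemma \ref{gen}(1), contradicting $dim_\varphi(\mathcal M^C_f)=0$ from Theorem \ref{main}. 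The contradiction thus lands on the IT-function, deliberately so, since the point of Section \ref{last} is to re-derive classical facts about qcF coalgebras using $\varphi$. You instead use the full strength of qcF at the level of $S$ itself (injective $\Rightarrow$ projective) to make $S$ an isolated vertex of the link quiver, and then appeal to the block decomposition: indecomposable coalgebra summands correspond to connected components of the link quiver, so the simple coefficient coalgebra of $S$ splits off, contradicting indecomposability. This is sound; the correspondence you flag as the main obstacle is exactly the content of Chin's link-quiver theory (the paper cites \cite{kn:chin} for the weaker direction ``indecomposable implies connected''), combined with the standard coradical-filtration fact that a coalgebra whose unique simple comodule has no self-extensions satisfies $C_1=C_0$ and hence $C=C_0$. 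What your route buys is independence from the IT-function: the proposition is revealed as pure coalgebra structure theory, and the argument is arguably more conceptual. What it costs is reliance on the full block-decomposition theorem where the paper needs only one direction of it plus an elementary diagram chase, and it forgoes the section's stated purpose of exercising the IT-function as a tool.
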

\begin{proof}
Let $S$ be a simple injective right $C$-comodule.  As $C$ is not simple, there is some simple right comodule $T\neq S$ (otherwise $C=E(S)=S$) with $Ext^1_C(S,T)\neq 0$ (see \cite{kn:chin}). \\
Let $\sigma$ be a non splitting exact sequence from $T$ to $S$ and consider the following commutative diagram:
$$\xymatrix
{\sigma: & 0 \ar[r]  & T \ar@{=}[d] \ar[r]& M \ar[r] \ar[d] &S \ar[r] \ar[d]^f& 0\\
        & 0 \ar[r] & T \ar[r] & E(T) \ar[r]  &X \ar[r] & 0}$$
\noindent
As $\sigma$ is non zero, we have that $f$ is non zero and then injective (since $S$ is simple). Moreover, as $E(T)$ has simple top (by Proposition \ref{final}), we get that $X$ is indecomposable. Thus, $S\cong X$, since $S$ is injective. We conclude that the injective dimension of $T$ is $1$, contradicting the assumption that $dim_\varphi(\mathcal M^C_f)=0$.\\
\end{proof}
\noindent

\begin{coro}
If $C$ is a left and right $qcF$ coalgebra, then $\nu_r$ is bijective.
\end{coro}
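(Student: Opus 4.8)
The plan is to prove bijectivity by establishing surjectivity, since injectivity of $\nu_r$ is already guaranteed by Proposition \ref{nu} (whose proof uses only that $C$ is left qcF). The new ingredient must therefore come from the right qcF hypothesis.

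First I would reinterpret $\nu_r$ in terms of projective covers. Since $C$ is left qcF, the indecomposable injective right comodule $E(S)$ is projective, and by Proposition \ref{final}(a) it has simple top $\nu_r(S)=Top(E(S))$. An indecomposable projective comodule with simple top $T$ is precisely the projective cover $P(T)$ of $T$, so $E(S)\cong P(\nu_r(S))$ for every simple right $C$-comodule $S$. Consequently, surjectivity of $\nu_r$ becomes equivalent to the assertion that every indecomposable projective right $C$-comodule is injective: given a simple right comodule $T$, the projective cover $P(T)$ exists (because $C$ is also right qcF, hence right semiperfect, so right comodules have enough projectives), and if $P(T)$ is injective then $P(T)=E(Soc(P(T)))$, whence $\nu_r(Soc(P(T)))=Top(P(T))=T$.

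It then remains to show that every indecomposable projective right comodule is injective, and this is where right qcF enters through the duality $(-)^*\colon \mathcal{M}^C_f\to {}^C\!\mathcal{M}_f$. Every indecomposable projective right comodule $P$ is finite dimensional (see \cite{gt}), so $P^*$ is a finite dimensional left comodule; since $P$ is projective, $P^*$ is an injective left comodule by the same duality argument used in Lemma \ref{aux} together with \cite{kn:rumanos}, Theorem 2.4.17. As $C$ is right qcF, every injective left comodule is projective, so $P^*$ is projective, and dualizing once more shows that $P\cong (P^*)^*$ is injective as a right comodule. This establishes the claim, and hence surjectivity.

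The main obstacle I anticipate is the bookkeeping with the duality: one must check that $(-)^*$ interchanges finite dimensional projectives and injectives in both directions and is compatible with tops/socles and with injective envelopes/projective covers, and that indecomposable projectives are finite dimensional so the duality applies. All of these facts are available in \cite{kn:rumanos} and \cite{gt} and were already used implicitly in the proof of Proposition \ref{final}; once they are in hand the argument is essentially formal, reducing bijectivity of $\nu_r$ to the symmetric pair of conditions ``indecomposable injective right comodules are projective'' (left qcF) and ``indecomposable projective right comodules are injective'' (right qcF).
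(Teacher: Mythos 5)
Your proof is correct and takes essentially the same approach as the paper: the paper defines $\mu_r(S)=\mathrm{soc}(P(S))$ via projective covers (which exist since right qcF implies right semiperfect) and checks that $\nu_r$ and $\mu_r$ are mutually inverse, which is exactly your injectivity-plus-section argument. Your duality step showing that the finite dimensional projective $P(T)$ is injective, hence equals $E(\mathrm{soc}(P(T)))$ so that $\nu_r(\mathrm{soc}(P(T)))=T$, is precisely the content behind the diagram that the paper declares ``clear''.
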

\begin{proof}
As $C$ is left and right $qcF$, we have that $\nu_r:S_r\to S_r$ and $\nu_l:S_l\to S_l$ (defined similarly) are injective. Now define:
    $$    \mu_r: S_r \to S_r, \ \ \ \mu_r(S)=soc(P(S)) ,$$
where $P(S)$ is the projective cover of $S$ (note that as $C$ is right semiperfect $\mathcal M^C_f$ has enough projectives).\\
\noindent
From the diagram
$$\xymatrix
{ 0 \ar[r]  & \mbox { Soc}(P(S)) \ar[d] \ar[r]^{\iota} & E(\mbox {Soc}(P(S))) \ar[r] \ar[d] & \mbox{coker}(\iota) \ar[r] \ar[d]& 0\\
 0 \ar[r] & \mbox{ ker}(\pi) \ar[r] & P(S) \ar[r]^{\pi}  & S \ar[r] & 0}$$
\noindent it is clear that $\nu_r$ and $\mu_r$ are mutually inverses.
\end{proof}
\ \\

\noindent
It is known that a coalgebra is left (right) qcF if and only if it is left (right) semiperfect and it generates the category of its left comodules (see \cite{kn:blas}). After Theorem \ref{main}, this result can be restated as follows:

\begin{coro}\label{fin}
Let $C$ be a left semiperfect coalgebra. $C$ generates the category of its left comodules if and only if $dim_\varphi(\mathcal M^C_f)=0$.
\end{coro}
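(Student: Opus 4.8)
The statement to prove is Corollary~\ref{fin}: for a left semiperfect coalgebra $C$, it generates the category of its left comodules if and only if $dim_\varphi(\mathcal M^C_f)=0$. The plan is to leverage the cited characterization (that $C$ is left qcF iff it is left semiperfect and generates its left comodules, see \cite{kn:blas}) together with Theorem~\ref{main}, reducing the corollary to a purely formal chain of equivalences rather than a fresh homological argument.

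First I would invoke the reformulated result of \cite{kn:blas}: under the standing hypothesis that $C$ is left semiperfect, the condition ``$C$ generates the category of its left comodules'' is equivalent to ``$C$ is left qcF.'' Second, I would invoke Theorem~\ref{main}: again under left semiperfectness, ``$C$ is left qcF'' is equivalent to ``$dim_\varphi(\mathcal M^C_f)=0$.'' Chaining these two equivalences gives exactly the stated biconditional, since both the hypothesis of \cite{kn:blas} and the hypothesis of Theorem~\ref{main} are the same standing assumption (left semiperfectness), which we have fixed throughout. Concretely, the argument is
\[
C \text{ generates } {}^C\!\mathcal M \iff C \text{ is left qcF} \iff dim_\varphi(\mathcal M^C_f)=0,
\]
the first $\iff$ by \cite{kn:blas} and the second by Theorem~\ref{main}, both valid because $C$ is assumed left semiperfect.

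The only genuine subtlety to check is that the left semiperfectness hypothesis is consistent across the two invoked results: Theorem~\ref{main} states ``left qcF iff left semiperfect and $dim_\varphi(\mathcal M^C_f)=0$,'' so under our standing assumption the ``left semiperfect'' conjunct is automatic, and likewise the generation criterion of \cite{kn:blas} carries its own left-semiperfect hypothesis, which we have also assumed. Thus there is no gap where one direction secretly requires a property we have not granted. I expect the main (indeed only) obstacle to be purely bookkeeping—ensuring that the ``left'' versus ``right'' conventions and the sided semiperfectness conditions in \cite{kn:blas} match the conventions of Definition~\ref{semip} and Theorem~\ref{main}—after which the proof collapses to a one-line substitution.
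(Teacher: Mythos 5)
Your proposal is correct and coincides with the paper's own (implicit) argument: the paper states Corollary~\ref{fin} precisely as a restatement obtained by chaining the result of \cite{kn:blas} (left qcF $\iff$ left semiperfect and generating ${}^C\!\mathcal M$) with Theorem~\ref{main}, giving no separate proof. Your attention to the matching of the left-semiperfect hypotheses is exactly the bookkeeping the paper leaves tacit.
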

\noindent
The question of whether the equivalence of Corollary \ref{fin} holds for non semiperfect coalgebras seems interesting.
\end{section}

\end{document}